\newtheorem{thm}{Theorem}
\newtheorem{lemma}[thm]{Lemma}
\newtheorem{cor}[thm]{Corollary}
\newtheorem{definition}[thm]{Definition}
\newtheorem{question}{Question}
\newcommand{\N}{\mathbb N}
\begin{document}

\title{Uniform Bounds for Non-negativity of the Diffusion Game}

\author{Andrew Carlotti, Rebekah Herrman}

\maketitle

\begin{abstract}
We study a variant of the chip-firing game called the \emph{diffusion game}. In the diffusion game, we begin with some integer labelling of the vertices of a graph, interpreted as a number of chips on each vertex, and then for each subsequent step every vertex simultaneously fires a chip to each neighbour with fewer chips. In general, this could result in negative vertex labels. Long and Narayanan \cite{period} asked whether there exists an $f(n)$ for each $n$, such that whenever we have a graph on $n$ vertices and an initial allocation with at least $f(n)$ chips on each vertex, then the number of chips on each vertex will remain non-negative. We answer their question in the affirmative, showing further that $f(n)=n-2$ is the best possible bound. We also consider the existence of a similar bound $g(d)$ for each $d$, where $d$ is the maximum degree of the graph.
\end{abstract}

\begin{section}{Introduction}

 In 1986, J. Spencer \cite{balancingvectors} proposed the following solitaire game. Let $N$ chips be arranged in a pile. At each time step, $\lfloor\frac{N}{2}\rfloor$ chips are moved one unit to the right of the pile, and $\lfloor\frac{N}{2}\rfloor$ chips are moved one unit to the left, with one chip remaining in the original pile if $N$ is odd. In subsequent steps, we repeat this process simultaneously on each of the resulting piles. 

 This solitaire game inspired the chip-firing game, introduced by Bj\"orner, Lov\'asz and Shor \cite{chipfiring}. The chip-firing game is played on a simple, connected graph $G$  on the vertex set $[n]=\{1,2,\ldots,n\}$ . In the game, each vertex $v\in [n]$ is assigned an amount of chips, $w_v$. The vertex $v$ is allowed to fire if $w_v \geq d_v$, where $d_v$ denotes the degree of vertex $v$. When vertex $v$ is fired, we remove $d_v$ chips from it, and add one chip to each neighbouring vertex. Only one vertex may be fired at a time, but Bj\"orner et al. found that the order of firings does not affect the length of the game. The game ends when all vertices have fewer chips than neighbours. The chip-firing game has several applications in computer science, mathematics, and physics \cite{algebra,universality,energy,motor}.
 
 The diffusion game was first introduced by Duffy, Lidbetter, Messinger and Nowakowski \cite{introduce} and is a variant of the chip-firing game. In the diffusion game, let $G$ be a graph on the vertex set $[n]$. At time $t=0$ each vertex $v\in [n]$ is assigned an initial integer label $w_v(0)$. We then update all labels at discrete integer time steps according to the rule
\begin{equation*}
w_v(t+1)=w_v(t) + |{u\in \Gamma (v) : w_u(t)>w_v(t)}| - |{u\in \Gamma (v) : w_u(t)<w_v(t)}| \textrm{.}
\end{equation*}
Intuitively, this corresponds to moving one chip along each edge whose vertices have differing numbers of chips, with the vertex with more chips giving a chip to the vertex with fewer.

For each $t\geq 0$, let $w_G(t)$ denote the vector $(w_1(t),w_2(t),\ldots,w_n(t))$. We say that $w_G(t)\ge k$ if $w_v(t)\ge k\ \forall v \in [n]$, and similarly for $w_G(t)\le k$.

Long and Narayanan \cite{period} proved that the diffusion game is eventually periodic with period one or two. That is, there exists $T\in \mathbb{N}$ and $k\in\{1,2\}$ such that for all $t\geq T$, $w_G(t)=w_G(t+k)$.

Our main result answers one of the questions posed by Long and Narayanan in their paper:

\begin{thm} \label{main_theorem}
Let $n \geq 2$. If $w_G(0) \ge f(n) = n-2$, then at all times $t\ge 0$ we have $w_G(t) \ge 0$.
\end{thm}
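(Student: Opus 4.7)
The plan is to argue by contradiction, focussing on the first time a vertex becomes negative. Let $t^\ast \ge 1$ be the smallest time at which $w_v(t^\ast) < 0$ for some vertex $v$, so that $w_G(s) \ge 0$ for every $s < t^\ast$. Writing $d_v^-(t)$ and $d_v^+(t)$ for the number of neighbours of $v$ with strictly smaller (respectively strictly larger) value at time $t$, the update rule gives $d_v^-(t^\ast-1) - d_v^+(t^\ast-1) = w_v(t^\ast-1) - w_v(t^\ast) \ge w_v(t^\ast-1) + 1$, and combining with $d_v^-(t^\ast-1) + d_v^+(t^\ast-1) \le d_v \le n-1$ forces $v$ to have at least $w_v(t^\ast-1) + 1$ strictly smaller neighbours at time $t^\ast-1$, all with values in $\{0, 1, \ldots, w_v(t^\ast-1)-1\}$.

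The base case $t^\ast = 1$ follows immediately. Since $w_v(0) \ge n-2$, the inequality $d_v^-(0) \ge w_v(0) + 1 \ge n-1$ together with $d_v \le n-1$ forces $d_v = n-1$ and $d_v^+(0) = 0$; thus $v$ is adjacent to every other vertex, each neighbour $u$ satisfying $n-2 \le w_u(0) < w_v(0)$, which requires $w_v(0) \ge n-1$. But then $w_v(1) = w_v(0) - (n-1) \ge 0$, contradicting $w_v(1) < 0$.

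The case $t^\ast \ge 2$ is the main difficulty: the inductive hypothesis only gives $w_G(t^\ast-1) \ge 0$, not $w_G(t^\ast-1) \ge n-2$, so the one-step argument cannot be repeated verbatim. My plan is to strengthen the induction by proving the companion invariant
\[
\min_v w_v(t) + \min_v w_v(t+1) \ge n-2
\]
along every trajectory starting from $w_G(0) \ge n-2$. This holds with equality in all the tight examples I have checked, including the period-two orbits $(n-2,\ldots,n-2,n-1) \leftrightarrow (n-1,\ldots,n-1,0)$ on $K_n$ and $(1,2,1) \leftrightarrow (2,0,2)$ on $P_3$. Granted the invariant, an induction on $t$ yields $w_G(t) \ge 0$: if $\min_v w_v(t) \ge n-2$, applying the one-step argument at time $t$ gives $\min_v w_v(t+1) \ge 0$; otherwise $\min_v w_v(t) \le n-3$ and the invariant yields $\min_v w_v(t+1) \ge (n-2) - \min_v w_v(t) \ge 1$.

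The main obstacle is establishing the companion invariant. Letting $u$ attain the minimum at time $t+1$, its strictly-smaller neighbours at time $t$ have values in $[\min_v w_v(t), w_u(t) - 1]$, and the analysis must leverage both the degree bound $d_u \le n-1$ and the conservation law $\sum_v w_v(t) \equiv \sum_v w_v(0) \ge n(n-2)$ to bound the net loss $d_u^-(t) - d_u^+(t)$. A configuration such as $(8,0,0,0)$ on $K_4$ violates the invariant outright, so the proof must rule out such configurations arising along a trajectory from $w_G(0) \ge n-2$; handling this reachability constraint cleanly is the most delicate step, and if the precise form of the invariant turns out to be slightly off, a refined variant involving the second-smallest value or the sorted profile of values may be needed instead.
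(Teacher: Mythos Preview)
Your base case $t^\ast = 1$ is correct, but the proposal stops well short of a proof: the entire content of the theorem lies in the case $t^\ast \ge 2$, and there you only \emph{conjecture} the companion invariant $\min_v w_v(t) + \min_v w_v(t+1) \ge n-2$ without proving it. You yourself flag that configurations like $(8,0,0,0)$ on $K_4$ violate the invariant outright, so any proof must use reachability from $w_G(0) \ge n-2$ in an essential way; but you give no mechanism for propagating that reachability constraint forward in time. Tracking only the minimum (or the two smallest values) at consecutive times is unlikely to close the argument on its own, because the one-step loss at a vertex depends on the full profile of its neighbours' labels, not just on the global minimum. As it stands, the inductive step is a wish, not an argument.

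The paper's proof is structurally quite different and avoids this difficulty. It first generalises to a non-deterministic \emph{weak diffusion game} in which, at each step, one may choose for every ordered pair $(u,v)$ with $w_u \ge w_v$ whether or not to pass a chip. A monotonicity lemma then shows that removing chips from an initial state can be matched, up to a permutation at each time, by some weak evolution with no more chips anywhere; this reduces the problem to the single initial state $(n-1, n-2, \ldots, n-2)$. For that state the paper introduces a \emph{digraph encoding}: edge weights $\lambda_{uv}(t) \in [-1,1] \cap \tfrac{1}{n}\mathbb{Z}$ with $w_v(t) = \mu + \sum_u \lambda_{uv}(t)$, where $\mu = n-2+\tfrac{1}{n}$. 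A minimisation argument shows one can always choose a \emph{good} encoding (edges point the ``right'' way relative to the label order), and then $\lambda_{uv}(t+1) = \lambda_{uv}(t) + d_{uv}(t+1)$ is again an encoding. The bound $w_v(t) \ge \mu - (n-1) = -1 + \tfrac{1}{n}$ follows immediately. In effect, the digraph encoding \emph{is} the strengthened inductive hypothesis you are groping for: it tracks a full $\binom{n}{2}$-dimensional witness rather than a scalar like $\min_v w_v(t)$, and that extra structure is what makes the induction go through.
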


Indeed, this is the best possible such result, as for each $n\geq 2$, the star on $n$ vertices with $n-3$ chips on each leaf and $n-2$ chips on the central vertex will, after one time step, have $-1$ chips on the central vertex.

We also consider similar bounds based upon the maximum degree $d$ of the graph. We show the following:

\begin{thm}\label{degree_bounds}
Let $g(d)$ be the least possible bound on the minimum number of chips on a vertex such that non-negativity of the labels is guaranteed.
\begin{enumerate}[i]
\item If $d\le 1$, then $g(d) = 0$ \label{degree_1}
\item If $d=2$, then $g(d) = 1$ \label{degree_2}
\item If $d=3$, then $g(d) \ge 3$ \label{degree_3}
\item If $d\ge 4$, then $g(d) = \infty$ \label{degree_4}
\end{enumerate}
\end{thm}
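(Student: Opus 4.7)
The four parts can be addressed independently. Part (i) is direct: if $d\le 1$ the graph is a disjoint union of isolated vertices and edges, and on each edge the diffusion only shuffles a single chip back and forth, so $w_G(0)\ge 0$ forces $w_G(t)\ge 0$ for all $t$. The lower bound in part (ii), that $g(2)\ge 1$, is witnessed by the path $P_3$ with initial labels $(0,1,0)$, which becomes $(1,-1,1)$ in one step.

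The upper bound $g(2)\le 1$ is the first substantive step. The plan is to show by induction on $t$ that $w_G(0)\ge 1$ implies $w_G(t)\ge 0$ on any graph of maximum degree $2$. A vertex can lose at most $\deg v \le 2$ chips per step, so from a non-negative state the only way to drop below zero is for a degree-$2$ vertex to hold value $1$ with both neighbours at value $0$. Thus the real inductive invariant is the absence of this ``$1$ between two $0$'s'' pattern at every time step. I would attempt this by tracing a hypothetical first occurrence of the pattern back one step, using that each label changes by at most $\pm 2$ and analysing the local $5$-vertex window around the offending vertex to show the predecessor either already exhibits the pattern or violates $w_G(0)\ge 1$. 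Pinning down the right invariant and completing the finite case enumeration is the main difficulty.

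Part (iii) is an existence claim, so it suffices to exhibit one graph of maximum degree $3$ and one initial labelling of minimum value $2$ for which the diffusion produces a negative label. I would search for such a witness among small graphs such as the $3$-prism, the Petersen graph, or graphs built by attaching short paths to copies of $K_{1,3}$, iterating the diffusion for a handful of steps. The main obstacle is that many small symmetric configurations settle quickly into non-negative period-$2$ orbits, so the witness likely requires an asymmetric initial labelling and a graph with more than a few vertices.

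Part (iv) requires, for every $N$, a graph of maximum degree $d$ with labels all $\ge N$ that eventually reaches a negative value. Since the diffusion on any fixed finite graph is eventually periodic, the minimum value attained over all time is bounded for any one graph, so the witnesses must grow with $N$. My proposal is a rooted tree in which the root has $d$ children and every other internal vertex has $d-1$ children (so each internal vertex has degree exactly $d$) of depth roughly $N$; assign labels equal to $N$ except for a large perturbation at the root. Small experiments for $d=4$ suggest that the perturbation propagates as a wave whose minimum value drops linearly with the depth, so depth $N+1$ should suffice. The hard part is to rigorously track the wave through the tree and prove the linear-in-depth drop, which I would attempt by induction on depth.
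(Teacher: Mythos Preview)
Your plan for (i) and the lower bound in (ii) match the paper, and your search strategy for (iii) is reasonable, though the paper simply exhibits a labelled finite subtree of the $3$-regular tree with all initial labels in $\{2,3\}$ whose root reaches $-1$ after three steps. The substantive gaps are in the upper bound for (ii) and in (iv).

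For (ii), the invariant ``all labels $\ge 0$ and no $(0,1,0)$ pattern'' is not preserved by the dynamics. On the bi-infinite path the configuration $\ldots,0,0,0,1,1,1,\ldots$ contains no $(0,1,0)$, yet one diffusion step produces $\ldots,0,0,1,0,1,1,\ldots$, which does. So tracing a first occurrence of $(0,1,0)$ back one step can land you in a state that still satisfies your invariant, and the induction stalls. You therefore need a stronger hypothesis that rules out such predecessors, and this is exactly where the paper's extra machinery enters. The paper first reduces every max-degree-$2$ graph to the bi-infinite path via the universal cover, then argues by minimal counterexample: with $T_0$ the first time a negative label appears, it proves that before $T_0$ no vertex is $0$ on two consecutive steps, introduces a second ``first time'' $T_1$ for the pattern $(0,1,1)$ or $(1,1,0)$, shows $T_1<T_0$, and establishes that any $0$ at a time $T\le\min(T_0,T_1)$ was a $2$ one step earlier. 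These auxiliary lemmas are what allow the backward trace to terminate in a contradiction; your single pattern is not enough.

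For (iv), the specific construction you propose does not produce negative labels. If every non-root vertex carries $N$ and the root carries a large value, the disturbance spreads outward with bounded amplitude: for $d=4$ a direct computation along the lines you suggest shows the minimum label drops only a fixed amount below $N$ (to $N-3$ within the first few steps) and thereafter the trough merely propagates down the tree without deepening. Thus for $N$ moderately large no label ever goes negative, regardless of depth, and there is no ``linear-in-depth'' drop. The paper's device is to run the process \emph{backwards} from the target: fix $T$, declare $w_0(T)=-1$, and compute what the labels $w_i(0)$ at distance $i=0,1,\ldots,T$ from the root must have been. This yields explicit values with minimum $w_T(0)=T(d-3)+1$, which tends to infinity with $T$ when $d\ge 4$. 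The backward construction replaces your forward wave analysis by a short recursion and is what actually delivers the linear growth of the initial minimum you were hoping to see.
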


For $d=3$, we know only that $g(3)\ge 3$; it may be that this inequality is tight.

\end{section}

\begin{section}{Order-based Bounds}

We will proceed by defining the \emph{weak diffusion game}, a more general, non-deterministic variant of the original diffusion game. We then reduce the problem to considering a specific initial state, and show that subsequent states can be represented by a digraph encoding, which need not be unique.

The following weaker result can be obtained by a conceptually simpler version of our main proof. This version differs from the one presented in two ways: the digraph encoding used does not require edge weights, and we need only reduce to the initial state $(n-1,n-1,\ldots,n-1)$. One may wish to consider this variation as a stepping stone to understanding the full proof of Theorem \ref{main_theorem}.

\begin{thm} \label{weaker_theorem}
Let $G$ be a graph with $n$ vertices. If $w_G(0) \ge n-1$, then at all times $t\ge 0$ we have $w_G(t) \ge 0$.
\end{thm}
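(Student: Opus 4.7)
My plan follows the three-step strategy previewed in the section introduction: introduce a non-deterministic enlargement of the diffusion game, reduce to a specific uniform initial state, and then bound the reachable states using an unweighted digraph encoding. I begin by defining the \emph{weak diffusion game}, in which at each step the firing rule on edges whose endpoints have strictly different labels is unchanged, but edges with equal labels may be fired in either direction, or not fired at all, at the player's discretion. The standard diffusion game is a particular play (leave every tied edge inactive), so any lower bound established uniformly across all weak plays transfers immediately to the standard game.

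The second step is a reduction: I show that in order to prove non-negativity whenever $w_G(0)\ge n-1$, it suffices to prove non-negativity for every play of the weak diffusion game launched from the uniform initial state $(n-1,n-1,\ldots,n-1)$. The idea is that starting from the uniform configuration, tied-edge firings provide enough freedom to generate, in a handful of initial weak steps, any label configuration that a standard diffusion game starting from some $w_G(0)\ge n-1$ could have already produced; beyond that point the weak game can mimic the standard dynamics step-for-step.

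The third step is the digraph encoding. To each state $w$ reachable by a weak play from $(n-1,\ldots,n-1)$ I associate a simple oriented subgraph $D$ of $G$ (each edge of $G$ carries at most one arc, in at most one direction) satisfying
\[
w_v \;=\; (n-1) + \deg^-_D(v) - \deg^+_D(v) \qquad \text{for every } v\in V(G).
\]
The encoding is maintained inductively: each weak step adds or flips arcs corresponding to the fired edges, and the freedom to orient tied edges is used to guarantee that no edge of $G$ ever carries two arcs simultaneously. Once this invariant is established, the required bound is immediate: since $D$ is a simple orientation of a subgraph of $G$, $\deg^+_D(v) \le \deg_G(v) \le n-1$, so $w_v \ge (n-1)-(n-1) = 0$.

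The main obstacle I foresee is the inductive maintenance of the digraph encoding. When many edges incident to a single vertex $v$ fire simultaneously, a firing across an edge $uv$ may try to place an arc where $D$ already has an arc in one of the two directions, which would violate simplicity. Using the weak game's non-determinism on ties to re-route firings so that every ``collision'' on an edge $uv$ is resolved by \emph{removing} an existing arc (a chip effectively returning) rather than duplicating it is the heart of the argument, and I expect it to rest on a careful local parity or matching argument on the set of fired edges incident to each vertex.
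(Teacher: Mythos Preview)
Your three-stage outline matches the paper's intended strategy for this weaker theorem, but two of the three stages, as you have set them up, do not go through.

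\textbf{The reduction step is broken.} You claim that from the uniform state $(n-1,\dots,n-1)$, tied-edge firings can generate any configuration reachable by standard diffusion from some $w_G(0)\ge n-1$. This is impossible: the total number of chips is conserved under any firing, so from a state with $n(n-1)$ chips you can never reach a state with more. Already on $K_2$ with initial state $(100,1)\ge n-1$ there is no way to manufacture this configuration (or any of its successors) from $(1,1)$. What is actually needed is a \emph{monotonicity} statement: if some evolution from $w_G(0)\ge (n-1,\dots,n-1)$ hits a negative label, then so does some weak evolution from $(n-1,\dots,n-1)$, up to permuting vertices at each time. Proving that requires being able to \emph{decline} transfers along edges whose endpoints have strictly different labels, and in fact to transfer chips between \emph{non-adjacent} vertices as well. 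In other words, the weak game has to be enlarged further than you propose: all transfers optional, and allowed between any ordered pair with $w_u\ge w_v$, not only along edges of $G$. With only tied-edge freedom on $E(G)$ you cannot carry out the comparison.

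\textbf{The encoding step inherits the same defect.} Restricting the orientation $D$ to a subgraph of $G$ means that when the naturally updated encoding becomes ``bad'' (an arc $u\to v$ present while $w_u\ge w_v$), the only way to repair it is to reroute flow around a cycle of $G$ through some third vertex. If $G$ is a tree no such cycle exists, and the tied-edge freedom you invoke cannot help because the offending firing on $uv$ is \emph{forced} whenever $w_u\ne w_v$. The paper's fix is to let the encoding live on the complete graph $K_n$ (again decoupled from $E(G)$): then any bad arc can be rerouted through an arbitrary third vertex, and an encoding of least total weight is automatically good. Once you allow that, the bound $\deg^+_D(v)\le n-1$ still holds (there are only $n-1$ other vertices), and your final inequality $w_v\ge (n-1)-(n-1)=0$ goes through unchanged.

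So the architecture is right, but both the weak game and the digraph encoding must be taken on $K_n$ rather than on $G$; the restriction to $E(G)$ is precisely what blocks the reduction and the rerouting.
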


\begin{subsection}{The Weak Diffusion Game}

We begin by making two modifications to the diffusion process.

First, rather than transferring chips along edges of a predetermined constant graph, we instead may choose at each time step whether or not to allow a chip to transfer between each pair of vertices. That is, at each time step, for each pair of vertices $u$ and $v$ with $w_u(t)>w_v(t)$, we are allowed to choose whether or not a chip is transferred from vertex $u$ to vertex $v$ (with these transfers being the only transfers allowed). So the original diffusion process is now one of many possible evolutions of the labels $w_G$.

Second, we permit also the transfer of chips between vertices having equal numbers of chips.

These modifications give us a process we shall call the \emph{weak diffusion game}. We can represent our choices of when to move chips by the values $d_{uv}(t)$ ($u,v\in [n]$, $u\ne v$, $t\in \N$), which satisfy:
\begin{gather*}
d_{uv}(t) \in \{-1,0,1\} \\
d_{uv}(t) = -d_{vu}(t) \\
d_{uv}(t)(w_u(t-1)-w_v(t-1)) \ge 0
\end{gather*}
The labels then evolve according to:
\begin{equation*}
w_v(t)=w_v(t-1)+\sum_{u\ne v} d_{uv}(t)
\end{equation*}

We can now state the following theorem

\begin{thm} \label{monotonicity}
Let $G$ be a graph with $n$ vertices. Suppose that $w_G$ is a possible evolution of the weak diffusion game. Then, given any initial state $w'_G(0)\leq w_G(0)$, there exists an evolution of the weak diffusion game $w'_G$ with this initial state, and a sequence of permutations $P_t$, such that for each $t\geq 0$ and $u\in [n]$, we have $w'_{P_t(u)}(t)\leq w_u(t)$. That is, if we remove some chips from the initial state of some evolution then, up to a permutation of the vertex labels at each time step, we can then remove chips from later states to obtain another valid evolution without ever needing to add chips to a vertex.
\end{thm}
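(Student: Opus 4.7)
The plan is to proceed by induction on $t$, with the base case $t=0$ handled by taking $P_0=\mathrm{id}$, since $w'_G(0)\le w_G(0)$ by hypothesis. I would first reduce to the case where $w'_G(0)$ differs from $w_G(0)$ at exactly one vertex by exactly one chip; the general case then follows by removing chips one at a time, applying this single-chip lemma repeatedly, and composing the resulting permutations at each stage.

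For the single-chip case, strengthen the inductive hypothesis to assert that at each time $t$ there is a distinguished vertex $v^*=v^*(t)$ such that $w'_{P_t(u)}(t)=w_u(t)$ for every $u\ne v^*$, while $w'_{P_t(v^*)}(t)=w_{v^*}(t)-1$. WLOG $P_t=\mathrm{id}$. Let $B=\{u:w_u(t)=w_{v^*}(t)\}$ denote the block of vertices tied with $v^*$ in $w(t)$; in $w'(t)$ the vertex $v^*$ becomes the unique minimum over $B$. For any pair $(u,v)$ not both lying in $B$, I claim one can copy $d'_{uv}(t+1):=d_{uv}(t+1)$ without violating the constraint $d'_{uv}(t+1)(w'_u(t)-w'_v(t))\ge 0$ -- with only an edge case when some neighbour $u$ of $v^*$ has $w_u(t)=w_{v^*}(t)-1$, which is easily handled. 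The real obstruction is within $B$: in $w'(t)$, $v^*$ cannot send chips to any other vertex of $B$.

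To fix this, consider the auxiliary digraph $D$ on $B$ whose arcs are the positive transfers $d_{uv}(t+1)=1$. The plan is to find a simple path $v^*=w_0\to w_1\to\cdots\to w_k$ in $D$ with $w_k$ a sink of $D$ restricted to $B$, and then apply the cyclic permutation $\sigma=(w_0\;w_1\;\cdots\;w_k)$ to the relevant transfers by setting $d'_{xy}(t+1)=d_{\sigma^{-1}(x)\sigma^{-1}(y)}(t+1)$ for $x,y$ in the support of $\sigma$, and copying $d$ elsewhere. A case analysis -- classifying pairs by whether both, one, or neither endpoint lies on the cycle, and whether the two values agree at time $t$ -- verifies that $d'$ satisfies all constraints, and a direct computation shows the deficit moves from $v^*$ to $w_k$, so one takes $P_{t+1}=\sigma\circ P_t$.

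The main obstacle is guaranteeing that such a sink-terminating path exists. When the strongly connected component of $v^*$ in $D|_B$ contains no vertex of out-degree zero, there is no reachable sink in $D|_B$, so the construction must be adapted: use a closed cycle in $D$ through $v^*$, cancelling only the outgoing arc $v^*\to w_1$ in $d'$ while retaining the rest of the cycle, and then exploit the defining flexibility of the weak diffusion game -- arbitrary vertex pairs (not only graph edges) may exchange chips -- to introduce additional compensating transfers within $B$ for every other out-neighbour of $v^*$. Checking that the combined $d'$ is valid and still produces a single-deficit state at time $t+1$, with the correct permutation recording where the deficit has moved, is the technical heart of the proof.
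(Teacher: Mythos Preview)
Your two reductions---to a single removed chip and to a single time step---match the paper exactly, and you correctly isolate the obstruction: once $v^*$ loses a chip, any outgoing transfer from $v^*$ to another vertex of its tie-block $B$ becomes illegal.

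Where you diverge is in how you dissolve this obstruction, and you miss the one-line observation that makes the whole thing trivial. Any directed cycle in the transfer digraph $(d_{uv}(t+1))$ has zero net effect on the labels, so without loss of generality the transfer digraph is \emph{acyclic}. Your ``main obstacle'' then disappears: restricted to $B$ the transfers form a DAG, so one may relabel the vertices so that $u<v$ implies both $w_u(t)\ge w_v(t)$ and $d_{uv}(t+1)\ne -1$. Taking $k'=\max\{i:w_i(t)=w_{v^*}(t)\}$ and the single transposition $P=(v^*\,k')$, one sets $d'_{P(u)P(v)}=d_{uv}$ and is done: $k'$ has no outgoing transfers within $B$ by construction, and a two-line check shows every $d'$ constraint holds and the deficit persists at the permuted position $k'$. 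No paths, no cycle surgery, no case analysis.

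Two further remarks on your version. First, as written your permutation is only applied to pairs with \emph{both} endpoints on the path, and you copy $d$ elsewhere; but then any arc from $v^*$ to a block vertex \emph{off} the path is copied unchanged and remains illegal. For the path idea to work the permutation must be applied globally (extended by the identity off the path). Second, the cycle case you call ``the technical heart'' is exactly what acyclicity removes for free; your sketch there---cancel one arc of a cycle through $v^*$ and then add unspecified compensating transfers for every other out-neighbour---is not a proof, and it is not needed.
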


\begin{proof}
It will suffice to prove this for a removal of one chip from the initial state; the full result then follows by induction on the number of chips removed.

Furthermore, it will suffice to show this for one time step; the result will then follow by induction on $t$.

Without loss of generality, we may assume that the transfer graph represented by $d_{uv}(1)$ is acyclic (since transfers forming a cycle have no net effect on the distribution of chips). We may then assume that the vertices are labelled such that if $1 \leq u < v \leq n$, then $w_u(0)\geq w_v(0)$ and $d_{uv}(1) \neq -1$. Now suppose $w'_G(0)$ is obtained from $w_G(0)$ by removing one chip from vertex $k$. Let $k'=max\{i\in[n]|w_i(0)=w_k(0)\}$, set the permutation $P_1=P=(kk')$, and set $d'_{P(u)P(v)}(1)=d_{uv}(1)$. Applying the transfers represented by $d'(1)$ gives us $w'_G(1)$ satisfying $w'_{P_1(u)}(1)\leq w_u(1)$, as required.
\end{proof}

\end{subsection}

\begin{subsection}{Proof of Theorem \ref{main_theorem}}

We begin by providing a link between bounds in the weak diffusion game and bounds in the original diffusion game, reducing the problem to establishing non-negativity of the weak diffusion game with specific initial conditions. We then produce an encoding of the game in a sequence of weighted directed graphs, leading to non-negativity as an immediate consequence.

\begin{lemma}
Let $G$ be a graph with $n$ vertices, let $k\geq 0$ and let $w_G$ be an evolution of the original diffusion process with $w_G(0)\geq k$. Suppose that, for some $t\geq 0$, we have $w_G(t) \ngeq 0$. Then there exists an evolution $w'_G$ of the weak diffusion game with $w'_G(0)=(k+1,k,k,\ldots,k)$ and $w'_G(t)\ngeq 0$.
\end{lemma}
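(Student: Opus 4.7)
The plan is to apply Theorem \ref{monotonicity} after a short reduction. Observing that the original diffusion process is a special case of the weak diffusion game, $w_G$ is itself a valid weak-game evolution, so the idea is to compare $w_G(0)$ with a permutation of $(k+1,k,\ldots,k)$ and exploit monotonicity.

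First I would dispose of the degenerate case $w_G(0) = (k,k,\ldots,k)$. Here every pair of vertices has equal chip counts at time $0$, so no transfers occur in the original diffusion process, giving $w_G(t) = w_G(0) \geq 0$ for all $t$ and contradicting the hypothesis $w_G(t) \ngeq 0$. Hence we may assume that some vertex $v^*$ satisfies $w_{v^*}(0) \geq k+1$.

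Then I would set $\tilde{w}_G(0)$ to have $\tilde{w}_{v^*}(0) = k+1$ and $\tilde{w}_v(0) = k$ elsewhere, so that $\tilde{w}_G(0) \leq w_G(0)$ componentwise. Theorem \ref{monotonicity} yields a weak diffusion evolution $\tilde{w}_G$ with this initial state together with permutations $P_t$ satisfying $\tilde{w}_{P_t(u)}(t) \leq w_u(t)$ for all $t$ and $u$. Choosing $u$ with $w_u(t) < 0$ (which exists by assumption) gives $\tilde{w}_{P_t(u)}(t) < 0$, so $\tilde{w}_G(t) \ngeq 0$. Since the weak diffusion game places no constraint on which pairs of vertices may exchange chips, vertex relabeling is a symmetry of the game, and an appropriate relabeling of $\tilde{w}_G$ produces an evolution $w'_G$ with initial state exactly $(k+1,k,\ldots,k)$ and $w'_G(t) \ngeq 0$, as required.

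I do not anticipate a serious obstacle: the lemma is essentially a direct corollary of Theorem \ref{monotonicity}, the only subtlety being the observation that the unique state $w_G(0) \geq k$ failing to dominate a permutation of $(k+1,k,\ldots,k)$ is the constant state $(k,\ldots,k)$, which is a fixed point of the diffusion process.
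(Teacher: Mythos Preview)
Your proof is correct and follows essentially the same approach as the paper: observe that the original diffusion process is a weak-game evolution, rule out the constant initial state $(k,\ldots,k)$ as a fixed point, and then apply Theorem~\ref{monotonicity} together with a relabelling of vertices. The paper's version is simply terser, leaving the details you spelled out implicit.
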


\begin{proof}
Note that $w_G$ is automatically a valid evolution of the weak diffusion game. Furthermore, we cannot have $w_v(0)=k\ \forall v$, as otherwise $w_G$ would be constant, contradicting $w_u(t)<0$. The lemma then follows from Theorem \ref{monotonicity} with an initial relabelling of the vertices.
\end{proof}

It now suffices to show that the weak diffusion game with initial state $w_G(0)=(n-1,n-2,\ldots,n-2)$ must remain non-negative.

\begin{definition}
Let $G$ be an $n$-vertex graph, and let $w_G$ be an evolution of the weak diffusion game on $G$ with mean label $\mu =\sum_v{w_v(0)} /n$. A \emph{digraph encoding} of a state $w_G(t)$ is a weighted directed graph with edge weights $\lambda_{uv}(t)$ for each $u,v \in [n]$, satisfying:
\begin{gather*}
\lambda_{uv}(t) \in [-1,1]\cap \mathbb{Z}/n\\
\lambda_{uv}(t)=-\lambda_{vu}(t)\\
w_v(t)=\mu+\sum_u\lambda_{uv}(t)
\end{gather*}
\end{definition}

Some digraph encodings lead more naturally to a representation of the subsequent state. This property is captured in the following definition:

\begin{definition}
Let $w_G$ be as above. We say that an encoding of the state $w_G(t)$ is \emph{good} if $\lambda_{uv}(t)\le 0$ whenever $w_u(t)\ge w_v(t)$. Otherwise, we say that the encoding is \emph{bad}.
\end{definition}

Note that the existence of a digraph encoding for a state $w_G(t)$ bounds the number of chips on each vertex between $\mu - (n-1)$ and $\mu + (n-1)$. We aim to show that a digraph encoding exists for every state of our evolution $w_G$. The following lemma will facilitate this:

\begin{lemma}
Let $w_G$ be as above. If $w_G(t)$ has a digraph encoding, then it has a good digraph encoding.
\end{lemma}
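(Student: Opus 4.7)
The plan is to transform the given encoding $\lambda$ into a good encoding by iterated local modifications, using a lexicographic potential to measure progress. The basic move is a \emph{triangle rotation}: for vertices $u,v,x$, set
\[
\lambda_{uv}(t) \mapsto \lambda_{uv}(t) - \tfrac{1}{n},\quad
\lambda_{ux}(t) \mapsto \lambda_{ux}(t) + \tfrac{1}{n},\quad
\lambda_{xv}(t) \mapsto \lambda_{xv}(t) + \tfrac{1}{n},
\]
and extend antisymmetrically. A direct check shows that this preserves each vertex-sum $\sum_y \lambda_{yv}(t) = w_v(t) - \mu$ and antisymmetry; the only question is whether the new weights remain in $[-1,1] \cap \tfrac{1}{n}\mathbb{Z}$.

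For the potential, define $M(\lambda) := \max\{w_u(t) - w_v(t) : \lambda_{uv}(t) > 0,\ w_u(t) \ge w_v(t)\}$ (or $-\infty$ if $\lambda$ is already good), and let $W(\lambda)$ be the total mass of bad pairs achieving this maximum gap. The lexicographic pair $\Phi(\lambda) := (M(\lambda), W(\lambda))$ takes only finitely many values (since $\lambda \in \tfrac{1}{n}\mathbb{Z}$), and $\Phi(\lambda) = (-\infty, 0)$ precisely when $\lambda$ is good. Given $\lambda$ not good, I would pick a bad pair $(u,v)$ realising $M(\lambda)$ and an intermediate vertex $x$ with $w_v(t) \le w_x(t) \le w_u(t)$ and $\lambda_{ux}(t), \lambda_{xv}(t) \le 1 - \tfrac{1}{n}$; the triangle rotation on $(u,x,v)$ then decreases the bad mass at gap $M(\lambda)$ by $\tfrac{1}{n}$. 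Any newly positive weights on $(u,x)$ or $(x,v)$ lie at strictly smaller gaps $w_u(t) - w_x(t)$ and $w_x(t) - w_v(t)$, so do not replenish $W$; hence $\Phi$ strictly decreases, and iteration terminates at a good encoding.

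The principal obstacle is establishing existence of a suitable intermediate $x$. If every candidate with $w_v(t) \le w_x(t) \le w_u(t)$ has $\lambda_{ux}(t) = 1$ or $\lambda_{xv}(t) = 1$, then summing the vertex-sum conditions at $u$ and $v$ with antisymmetry yields an inequality incompatible with $\lambda_{uv}(t) > 0$ and $|\lambda| \le 1$. In the edge case $w_u(t) = w_v(t)$ (so $M(\lambda) = 0$), the triangle rotation must be replaced by a four-cycle rotation through vertices above and below the current level, designed to keep newly created bad weights confined to gap $0$; a short case analysis, exploiting the flow-conservation conditions at the other levels, shows such a cycle is always available.
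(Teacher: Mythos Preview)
Your potential $\Phi=(M,W)$ does not strictly decrease under the rotation you describe, because you allow the intermediate vertex $x$ to satisfy $w_v(t)\le w_x(t)\le w_u(t)$ with equality. If, say, $w_x(t)=w_u(t)$, then the pair $(x,v)$ has gap exactly $M(\lambda)$, and the rotation raises $\lambda_{xv}$ by $1/n$ while lowering $\lambda_{uv}$ by $1/n$; the bad mass at level $M$ is merely shifted from $(u,v)$ to $(x,v)$, and $W$ stays the same. A concrete instance: take $n=4$, labels $w_1=w_2=1$, $w_3=w_4=0$, and the encoding $\lambda_{13}=\lambda_{23}=1/4$, $\lambda_{14}=\lambda_{24}=-3/4$, $\lambda_{34}=1$, $\lambda_{12}=0$. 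The bad pair $(1,3)$ has $M=1$; choosing $x=2$ satisfies all your stated constraints, but the rotation sends $(\lambda_{13},\lambda_{23})$ from $(1/4,1/4)$ to $(0,1/2)$, leaving $W=1/2$ unchanged. Demanding strict intermediacy $w_v<w_x<w_u$ would repair the monotonicity claim, but then such an $x$ need not exist at all (as in this two-level example), and your sketched existence argument via ``summing the vertex-sum conditions'' no longer applies. The $M=0$ case is likewise left to an unspecified four-cycle argument.

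The paper sidesteps all of this by using a different potential: the $\ell^1$-norm $\sum_{u<v}|\lambda_{uv}(t)|$. Given a bad pair $(u,v)$ in a minimising encoding, one subtracts the vertex-sum identities to get $\sum_{w\ne u,v}(\lambda_{wu}-\lambda_{wv})=w_u-w_v+2\lambda_{uv}>0$, so some third vertex $w$ has $\lambda_{wu}>\lambda_{wv}$, with no constraint whatsoever on where $w_w(t)$ sits relative to $w_u(t),w_v(t)$. Setting $a=\lambda_{uv}$, $b=\lambda_{vw}$, $c=\lambda_{wu}$, one has $a>0$ and $b+c>0$; hence at least two of $a,b,c$ are positive and none equals $-1$, so subtracting $1/n$ from each keeps all weights in $[-1,1]$ and strictly lowers $|a|+|b|+|c|$, contradicting minimality. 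This avoids both the intermediacy requirement and any separate treatment of the equal-label case.
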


\begin{proof}
Of the many possible digraph encodings for $w_G(t)$, consider an encoding of least absolute sum---that is, an encoding $\lambda_{uv}(t)$ in which $\sum_{u<v}|\lambda_{uv}(t)|$ is minimised. We show that this is necessarily a good encoding.

For a contradiction, suppose instead that this encoding is bad. Then there exist $u,v$ such that $w_u(t)\ge w_v(t)$, but $\lambda_{uv}(t)>0$. We then have:
\begin{gather*}
\sum_{w\ne u,v}\left(\lambda_{wu}(t)-\lambda_{wv}(t)\right) + \lambda_{vu}(t)-\lambda_{uv}(t)= w_u(t)- w_v(t)\\
\sum_{w\ne u,v}\left(\lambda_{wu}(t)-\lambda_{wv}(t)\right) > 0
\end{gather*}
So there exists $w$ such that:
\begin{gather*}
\lambda_{wu}(t)-\lambda_{wv}(t) > 0\
\end{gather*}
Now let $a=\lambda_{uv}(t)$, $b=\lambda_{vw}(t)$ and $c=\lambda_{wu}(t)$. We have that $a>0$ and $b+c>0$. Note that we can add a constant $k$ to each of these terms without affecting the encoded vertex labels. Since this was an encoding that minimised the absolute sum, we have that $|a+k|+|b+k|+|c+k|$ is minimised at $k=0$ (subject to $a+k,b+k,c+k\in [-1,1]$). Since at least two out of $a$, $b$ and $c$ are positive, and none of them are equal to $-1$, it is clear that taking $k=-1/n$ reduces the sum of the absolute values without breaking any of the constraints. Thus there is a digraph encoding with smaller absolute sum, contradicting the minimality of the original encoding.

It follows that the original encoding was good, as desired.
\end{proof}

We can now show the existence of encodings at all time steps:

\begin{lemma}
Let $w_G$ be as above. Whenever $w_G(t)$ has a digraph encoding, then $w_G(t+1)$ has a digraph encoding.
\end{lemma}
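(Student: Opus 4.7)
The plan is to take a good encoding of $w_G(t)$, which exists by the previous lemma, and obtain a digraph encoding of $w_G(t+1)$ by simply adding the transfer values $d_{uv}(t+1)$ to the edge weights:
\[
\lambda_{uv}(t+1) := \lambda_{uv}(t) + d_{uv}(t+1).
\]
The rest is verification of the three properties required of a digraph encoding.

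Antisymmetry of $\lambda(t+1)$ is immediate from antisymmetry of $\lambda(t)$ and of $d(t+1)$, and membership in $\mathbb{Z}/n$ is immediate since $d_{uv}(t+1) \in \{-1,0,1\} \subseteq \mathbb{Z}/n$. For the label constraint, note that
\[
\mu + \sum_u \lambda_{uv}(t+1) = \mu + \sum_u \lambda_{uv}(t) + \sum_u d_{uv}(t+1) = w_v(t) + \sum_u d_{uv}(t+1) = w_v(t+1),
\]
exactly as needed.

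The key step---the one that actually uses the hypothesis that we started with a \emph{good} encoding---is verifying that $\lambda_{uv}(t+1) \in [-1,1]$. If $d_{uv}(t+1)=0$ then $\lambda_{uv}(t+1)=\lambda_{uv}(t)\in[-1,1]$. If $d_{uv}(t+1)=+1$, then the weak-diffusion sign constraint forces $w_u(t)\ge w_v(t)$, and the goodness of $\lambda(t)$ then gives $\lambda_{uv}(t)\le 0$, so $\lambda_{uv}(t+1)\in[0,1]$. The case $d_{uv}(t+1)=-1$ is symmetric, yielding $\lambda_{uv}(t+1)\in[-1,0]$. This is precisely the part I expect to be the main (indeed the only nontrivial) obstacle, and goodness is the exact property that makes it go through: it guarantees that a chip transfer $u\to v$ always happens along an edge whose weight still has ``room'' to absorb the $+1$.

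Putting these pieces together, $\lambda(t+1)$ defined above is a digraph encoding of $w_G(t+1)$, which is the required conclusion.
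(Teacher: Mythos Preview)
Your proof is correct and follows exactly the same approach as the paper: take a good encoding (available by the previous lemma), set $\lambda_{uv}(t+1)=\lambda_{uv}(t)+d_{uv}(t+1)$, and use goodness together with the sign constraint on $d_{uv}$ to check that the new weights stay in $[-1,1]$. You have simply spelled out the verifications (antisymmetry, $\mathbb{Z}/n$-membership, the label identity) more explicitly than the paper does.
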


\begin{proof}
By the previous lemma, we may take a good encoding $\lambda_{uv}(t)$ of $w_G(t)$. Then $\lambda_{uv}(t+1)=\lambda_{uv}(t)+d_{uv}(t+1)$ gives an encoding of $w_G(t+1)$. In particular, $\lambda_{uv}(t+1)\in \{-1,0,1\}$, since $d_{uv}(t+1)>0$ implies $w_u(t)\ge w_v(t)$, which in turn implies $\lambda_{uv}(t)\le 0$.
\end{proof}

\begin{cor}\label{encodability}
Whenver $w_G(0)$ has a digraph encoding, then $w_G(t)$ has a digraph encoding for all $t\ge 0$.\qed
\end{cor}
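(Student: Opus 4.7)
The plan is to proceed by straightforward induction on $t$, with the preceding lemma doing essentially all of the work. This is a routine step in the argument and I do not expect any genuine obstacle; the main content of the chain of reasoning lies in the two lemmas immediately before the corollary.

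More precisely, I would take as the base case the hypothesis of the corollary, namely that $w_G(0)$ has a digraph encoding. For the inductive step, assume that $w_G(t)$ has a digraph encoding for some $t \ge 0$; the previous lemma then directly supplies a digraph encoding of $w_G(t+1)$. Iterating this gives encodings for all $t \ge 0$. Because the statement is an immediate consequence of a single lemma via induction, I would keep the written proof to essentially one or two sentences, or simply remark that the result is immediate from the previous lemma by induction on $t$ and close with \texttt{\textbackslash qed}.

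If I wanted to be slightly more explicit to aid the reader, I would note that no auxiliary construction is needed beyond the one supplied by the previous lemma: at each step we use the good encoding guaranteed by the earlier lemma and add $d_{uv}(t+1)$ edge-wise. The only subtlety worth flagging (already handled inside the preceding lemma) is that passing from an arbitrary encoding to a good one is what ensures the new weights remain in $[-1,1]\cap\mathbb{Z}/n$, so the induction hypothesis is preserved as stated and not merely up to rescaling.
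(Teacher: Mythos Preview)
Your proposal is correct and matches the paper's approach exactly: the corollary is stated with an immediate \qed, since it follows by a trivial induction on $t$ using the preceding lemma. Your plan to keep the written proof to a sentence or simply note immediacy is precisely what the paper does.
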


We can now complete our proof of Theorem \ref{main_theorem}. First, observe that $w_G(0)=(n-1,n-2,\ldots,n-2)$ has a digraph encoding where $\mu = n-2+(1/n)$, $\lambda_{u,1}(0)=1/n$ and $\lambda_{uv}(0)=0$ for $u,v\ne 1$. Thus by Corollary \ref{encodability}, $w_G(t)$ has a digraph encoding for all $t\ge 0$.

This means that for any $v,t$, we have
\begin{equation*}
w_v(t)=\mu - \sum_{u\ne v}\lambda_{uv}(t) \ge \mu - (n-1) = -1+(1/n)
\end{equation*}

Since $w_v(t)$ is an integer, this implies $w_v(t)\ge 0$, as required.\qed

\end{subsection}

\begin{subsection}{Remarks}

The proof of Theorem \ref{main_theorem} applies also to directed graphs and to graphs which vary over time. We can further extend it to multigraphs; in this case, if $m$ is the maximum number of edges between two vertices, and our initial state has at least $m(d-1)-1$ chips on each vertex, then no vertex ever attains a negative number of chips.

The idea of digraph encodings can also be used to give an alternative proof of Long and Narayanan's result that the diffusion game is eventually periodic (although this method does not bound the eventual period as strongly). Indeed, we extend the definition of a digraph encoding to allow edges weights to take any value in $\mathbb R$. Then we encode a state using the digraph whose edge-weight sequence, ordered from largest to smallest, is lexicographically smallest. These edge-weight sequences form a sequence over time, which is decreasing (in the above order) until all the weights have magnitude less than $1$. This happens in finite time since every edge weight is in $\mathbb Z / k$ for some $k=k(n)$, after which an ordinary digraph encoding exists for every state.

\end{subsection}

\end{section}

\begin{section}{Bounds using the Maximum Degree}

We now prove the bounds given in Theorem \ref{degree_bounds}. Note that we may restrict our attention to infinite $d$-regular trees. Indeed, for any graph $G$ with maximum degree $d$, take a disjoint union of two copies of $G$, and add edges between corresponding vertices in the two copies to make the graph $d$-regular. Then consider the universal cover $H$ of $G$ -- this is the $d$-regular infinite tree. We may assign labels to $H$ according to the labels of the corresponding vertices of $G$; these labels evolve in the same manner as the corresponding labels on $G$. Conversely, if any vertex $v$ of a $d$-regular tree can attain a negative label in finite time $T$, then this will be achieved also with the same initial conditions restricted to the finite graph consisting of all vertices at distance at most $T$ from $v$.

\begin{subsection}{Proof of Theorem \ref{degree_bounds}}

We consider each case in turn:

\begin{enumerate}[i]

\item $d\leq 1$

The graph $G$ is either a point or a single edge; in either case the result is trivial.

\item $d=2$

First, note that  $g(2)>0$, as a path on three vertices starting with a single chip on the central vertex attains a negative chip value on the second diffusion step.

Next, consider diffusion on the infinite path with vertex set $V=\mathbb Z$, and assume that all labels are initially at least $1$. Suppose for contradiction that some label subsequently becomes negative, and let $T_0$ be the earliest time at which any vertex has a negative label. We will us the following lemma: 

\begin{lemma}\label{consecutive_zeroes}
Before time $T_0$, no vertex can have label $0$ on two consecutive time steps. 
\end{lemma}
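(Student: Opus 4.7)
The plan is to argue by contradiction via backward induction in $t$. Suppose some vertex $v$ satisfies $w_v(t) = w_v(t+1) = 0$ with $t+1 < T_0$, and choose such a pair $(v,t)$ minimising $t$. Since $w_G(0) \ge 1$, we have $t \ge 1$.

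The first step is to show that both neighbours of $v$ carry label $0$ at time $t$. Since $t < T_0$, all labels at time $t$ are nonnegative, so no neighbour of $v$ is strictly below $w_v(t)=0$. The diffusion rule then simplifies to
\begin{equation*}
w_v(t+1) = |\{u \in \Gamma(v) : w_u(t) > 0\}|,
\end{equation*}
and setting this equal to $0$ forces both neighbours of $v$ to have label $0$ at time $t$.

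The second, more delicate step is to push back one time step: I claim that $w_v(t-1) = 0$. Since $w_v(t-1) \ge 0$ and a label changes by at most the degree ($=2$) in one step, $w_v(t-1) \in \{0,1,2\}$. In the case $w_v(t-1)=1$, the update rule at $v$ forces one neighbour at time $t-1$ to have label $0$ and the other label $1$; then applying the update rule at the neighbour with label $0$ (which also has label $0$ at time $t$) forces the next vertex along that side of the path to have had a strictly negative label at time $t-1$, contradicting $t-1 < T_0$. In the case $w_v(t-1)=2$, both neighbours of $v$ have labels in $\{0,1\}$ at time $t-1$, and applying the update rule at either neighbour (which drops to $0$ by time $t$) similarly forces the next vertex along the path to have had a negative label at $t-1$.

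Hence $w_v(t-1) = w_v(t) = 0$ with $t < T_0$, contradicting the minimality of $t$. The crux is the finite case analysis in the backward step; thanks to the path being $2$-regular, only a handful of neighbour configurations must be checked, and each quickly produces a vertex with a negative label before $T_0$.
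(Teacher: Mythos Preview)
Your proof is correct and follows the same minimal-counterexample strategy as the paper: both arguments show that the neighbours of $v$ carry label $0$ at the earlier of the two times, and then argue that $w_v$ itself was $0$ one further step back, contradicting minimality (the paper compresses your case split on $w_v(t-1)$ into the single remark that three consecutive zeros at time $t$ are impossible if the middle value was positive at time $t-1$). One small imprecision worth fixing: in the case $w_v(t-1)=2$, when a neighbour has label $1$ at time $t-1$, the update at that neighbour does not force the next vertex along the path to be negative---rather, the equation has no solution at all, since a single edge cannot contribute $-2$---so you still get a contradiction, just not via the mechanism you describe.
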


\begin{proof}
Suppose for a contradiction that $w_v(T-1)=w_v(T)=0$ for some $v\in V$ and $0<T<T_0$. Take the least such $T$.

Then $T>1$ since $w_v(0) > 0\ \forall \ v$, and $w_{v-1}(T-1),w_{v+1}(T-1) \ge 0$, as $T-1<T_0$.

It follows that $w_{v-1}(T-1)=w_{v+1}(T-1)=0$, else the diffusion process would yield $w_v(T)>0$.

Finally, $w_v(T-2)=0$, otherwise $w_{v-1}(T-1)$, $w_v(T-1)$ and $w_{v+1}(T-1)$ could not all be $0$.

This contradicts the minimality of $T$, yielding the desired result.
\end{proof}

Now let $T_1$ be the least time such that there exists a vertex $v_1$ with:
\begin{equation*}
(w_{v_1-1}(T_1), w_{v_1}(T_1), w_{v_1+1}(T_1))= (0,1,1) \text{ or } (1,1,0)
\end{equation*}

If the patterns $(0,1,1)$ and $(1,1,0)$ do not exist, then we say that $T_1=\infty$. We have the following lemma about the labels that precede a zero:

\begin{lemma}\label{before_zero}
Let $T\leq T_0,T_1$, and $w_v(T)=0$. Then $w_v(T-1)=2$.
\end{lemma}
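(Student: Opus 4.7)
The plan is to set $a = w_{v-1}(T-1)$, $b = w_v(T-1)$, $c = w_{v+1}(T-1)$ and argue by cases on $b$. Since $T - 1 < T_0$ the three values are non-negative integers, and since $w_v$ can change by at most $2$ in a single step, the assumption $w_v(T) = 0$ forces $b \in \{0, 1, 2\}$. It remains to rule out $b = 0$ and $b = 1$.

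For $b = 1$, the net change at $v$ is $-1$, which decomposes uniquely (up to swapping the two neighbours) as one neighbour contributing $-1$ and the other $0$. A contribution of $-1$ requires a strictly smaller, non-negative neighbour, hence equal to $0$, while a contribution of $0$ requires the other neighbour to equal $b = 1$. This produces the pattern $(0, 1, 1)$ or $(1, 1, 0)$ at time $T - 1 < T_1$, contradicting the minimality of $T_1$.

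For $b = 0$ I would argue by induction on $T$, the base case $T = 1$ being immediate since $w_v(0) \geq 1$. Since $a, c \geq 0 = b$ rules out negative contributions, both contributions must vanish, forcing $a = c = 0$. Now apply the inductive hypothesis at time $T - 1$ (valid since $T - 1 \leq T_0, T_1$): from $w_v(T-1) = 0$ we obtain $w_v(T-2) = 2$, and repeating the case-$b = 2$ analysis one step earlier forces both neighbours of $v$ at time $T - 2$ to be strictly less than $2$, in particular $w_{v-1}(T-2) < 2$. But applying the inductive hypothesis to $v - 1$ using $w_{v-1}(T-1) = 0$ yields $w_{v-1}(T-2) = 2$, a contradiction.

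The main obstacle is the case $b = 0$: the key observation is that the lemma's conclusion, applied at the prior time step to both $v$ and $v - 1$, produces contradictory values for $w_{v-1}(T-2)$. This closely mirrors the structure of Lemma \ref{consecutive_zeroes}; one could alternatively strengthen that lemma slightly and derive Lemma \ref{before_zero} from it, but the inductive formulation above is self-contained and keeps the bookkeeping around $T_0$ and $T_1$ transparent.
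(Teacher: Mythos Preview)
Your argument is correct. The treatment of $b=1$ is identical to the paper's: both observe that the forced neighbour values produce the pattern $(0,1,1)$ or $(1,1,0)$ at time $T-1<T_1$, contradicting the minimality of $T_1$.

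The only substantive difference is in how you dispose of $b=0$. The paper simply invokes Lemma~\ref{consecutive_zeroes}, which has already ruled out two consecutive zeros before time $T_0$; this makes the case a one-liner. You instead fold that argument into the proof itself via induction on $T$: from $a=c=0$ you apply the inductive hypothesis at $v$ and at $v-1$ to get $w_v(T-2)=w_{v-1}(T-2)=2$, and then note that the drop $2\to 0$ at $v$ forces $w_{v-1}(T-2)<2$. This is self-contained and avoids any dependence on the earlier lemma, at the cost of being a little longer and slightly duplicating its logic (as you yourself observe). One small presentational point: you refer to ``the case-$b=2$ analysis'' without having written one out; it would be cleaner to simply say that a net change of $-2$ forces both neighbours at time $T-2$ to be strictly smaller than $2$.
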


\begin{proof}
By the definition of $T_0$, we have $w_v(T-1)\geq 0$. Lemma \ref{consecutive_zeroes} tells us that $w_v(T-1) \neq 0$. We also have $w_v(T-1)\neq 1$, else, by the definition of $T_0$, we would need $w_{v-1}(T-1)$ and $w_{v+1}(T-1)$ to equal $0$ and $1$ in some order, contradicting the definition of $T_1$. Since $w_v$ can change by at most $2$ at each step of the diffusion process, it follows that $w_v(T-1)=2$.
\end{proof}

We now show that the pattern $(0,1,1)$ or $(1,1,0)$ exists before time $T_0$.

\begin{lemma}
$T_1<T_0$.
\end{lemma}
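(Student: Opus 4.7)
The plan is to argue by contradiction: assume $T_1 \geq T_0$, so that Lemma \ref{before_zero} applies to every vertex with label $0$ at any time up to and including $T_0$. First I would pin down the local picture at time $T_0-1$ around a vertex $v$ with $w_v(T_0) < 0$. Since each step changes a label by at most $2$, and every label at time $T_0-1$ is nonnegative by minimality of $T_0$, a short case check on the possible pairs $(w_v(T_0-1), w_v(T_0))$---using the fact that no neighbor has a negative label at time $T_0-1$ to pull $w_v$ down---forces $w_v(T_0-1) = 1$, $w_v(T_0) = -1$, and $w_{v-1}(T_0-1) = w_{v+1}(T_0-1) = 0$ (the decrease by $2$ requires both neighbors strictly below $v$, and they must be nonnegative). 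In particular this already requires $T_0 \geq 2$, which is consistent since all initial labels are at least $1$.

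Next I would push the information one more step into the past. Lemma \ref{before_zero} applied to $v \pm 1$ at $T = T_0 - 1$ gives $w_{v-1}(T_0-2) = w_{v+1}(T_0-2) = 2$. With both neighbors of $v$ equal to $2$ at time $T_0-2$, the diffusion update at $v$ can produce $w_v(T_0-1) = 1$ only when $w_v(T_0-2) = 3$: any value at most $1$ would be pushed up by $+2$ to a value of at least $2$, the value $2$ is fixed by the update, and only $3$ drops by $-2$ to land on $1$.

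The contradiction then comes from re-examining the transition $w_{v-1}(T_0-2) = 2 \mapsto w_{v-1}(T_0-1) = 0$, which is the maximal permitted decrease of $2$ and therefore requires both neighbors of $v-1$ (in particular $v$ itself) to have label strictly less than $2$ at time $T_0-2$. But we have just shown $w_v(T_0-2) = 3$. The main obstacle is making sure the opening case analysis on $(w_v(T_0-1), w_v(T_0))$ is genuinely exhaustive; once the configuration at time $T_0-1$ is nailed down, the rest of the argument is a short backwards computation anchored by Lemma \ref{consecutive_zeroes} and Lemma \ref{before_zero}.
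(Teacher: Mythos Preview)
Your argument is correct and follows the same backward-from-$T_0$ strategy as the paper. In fact you are more careful than the published proof: the paper simply asserts that with $w_{v_0\pm 1}(T_0-2)=2$ ``the diffusion process cannot attain $w_{v_0}(T_0-1)=1$,'' whereas you correctly isolate the remaining possibility $w_{v_0}(T_0-2)=3$ and dispose of it by checking that this blocks the transition $w_{v_0-1}(T_0-2)=2\to w_{v_0-1}(T_0-1)=0$.
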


\begin{proof}
Suppose for a contradiction that the pattern $(0,1,1)$ or $(1,1,0)$ does not exist before time $T_0$. We work backwards from time $T_0$. By the definition of $T_0$ and $v_0$, it follows that $w_{v_0-1}(T_0-1)=w_{v_0+1}(T_0-1)=0$ and $w_{v_0}(T_0-1)=1$.

Now consider time $T_0-2$. By Lemma \ref{before_zero}, we have that $w_{v_0-1}(T_0-2)=w_{v_0+1}(T_0-2)=2$. But then the diffusion process cannot attain $w_{v_0}(T_1)=1$.

Hence the pattern $(0,1,1)$ or $(1,1,0)$ does exist before time $T_0$.
\end{proof}

We shall finish by working backwards from time $T_1$ until we reach another contradiction. We assume that $(w_{v_1-1}(T_1), w_{v_1}(T_1), w_{v_1+1}(T_1))=(0,1,1)$. Then by Lemma \ref{before_zero} we have that $w_{v_1-1}(T_1-1)=2$. In order that the diffusion process gives us the stated values at time $T_1$, we require that $(w_{v_1}(T_1-1), w_{v_1+1}(T_1-1), w_{v_1+2}(T_1-1)) = (1,0,0) \text{ or } (0,0,w)$, for some $w\in \mathbb{N}$. In either case we have two adjacent $0$'s, which by Lemma \ref{before_zero} must each be preceded by a $2$. However, adjacent $2$'s cannot become adjacent $0$'s under one step of the diffusion process.

Having derived a contradiction from our original assumption, we conclude that no vertex label can ever become negative. \qed

\item $d=3$

The following diagrams demonstrate that $f(3)\ge 3$. Note that the initial state uses only two different labels: $2$ and $3$.
\begin{center}
\tikz [grow cyclic, level distance=8mm,
    level 1/.style={sibling angle=120,rotate=-150},
    level 2/.style={sibling angle=90},
    ] {
    \node at (1,8.5) {-1};
    \node at (1,7.5) {T=3};

    \node at (4,8.5) {2}
        child { node {1} }
        child { node {1} }
        child { node {1} };
    \node at (4,7.5) {T=2};

    \node at (2,5) {3}
        child { node {2} 
            child { node {1} }
            child { node {1} }
        }
        child { node {3} 
            child { node {2} }
            child { node {2} }
        }
        child { node {3} 
            child { node {2} }
            child { node {2} }
        };
    \node at (2,2.5) {T=1};

    \node at (7,5) {2}
        child { node {3} 
            child { node {3}
                child { node {2} }
                child { node {2} }
            }
            child { node {3}
                child { node {2} }
                child { node {2} }
            }
        }
        child { node {2} 
            child { node {3}
                child { node {3} }
                child { node {3} }
            }
            child { node {2}
                child { node {2} }
                child { node {2} }
            }
        }
        child { node {2} 
            child { node {2}
                child { node {2} }
                child { node {2} }
            }
            child { node {3}
                child { node {3} }
                child { node {3} }
            }
        };
    \node at (7,2.5) {T=0};
}
\end{center}

\item $d=4$

Consider the infinite $d$-regular tree, and fix some vertex $v_0$. We assign labels to each vertex according to its distance from $v_0$; in particular, at time $t$, we assign the label $w_i(t)$ to all vertices at distance $i$ from $v_0$.

Working backwards from a time $T$ at which $w_i(T)=-1$, we can construct the following evolution:

\begin{equation*}
\begin{array}{c|ccccc}
t & T & T-1 & T-2 & \ldots & 0 \\
\hline
w_0(t) & -1 & d-1 & 2d-1 & \ldots & Td-1\\
w_1(t) & & d-2 & 2d-4 & \ldots & Td-2T \\
w_2(t) & & & 2d-5 & \ldots & Td-2T-1 \\
\vdots & & & & \ddots & \vdots \\
w_T(t) & & & & & Td-3T+1 \\
\end{array}
\end{equation*}

Thus $g(d)>Td-3T+1$ for all $T>0$, so $g(d)=\infty$, as required.

\end{enumerate}

\end{subsection}

\end{section}

\begin{section}{Concluding Remarks}

Our results on maximum degree bounds are incomplete; specifically, we leave the following unanswered:

\begin{question}
What is $g(3)$? In particular, is it finite?
\end{question}

More generally, when $g(d)$ was found to be infinite, we needed to use arbitraily large ranges of initial labels in order to attain negative labels for a given minimum initial label. 

This raises the following question, originally asked by Long and Narayanan in the equivalent context of infinite graphs of bounded degree:

\begin{question}
Does there exist $g(d,k)<\infty$ such that for any graph $G$ of maximum degree $d$, if the vertices of $G$ are given initial labels in $[g(d,k), g(d,k)+k]$, then all vertex labels in this diffusion game remain non-negative?
\end{question}

\end{section}

\end{document}